\DeclareMathOperator{\Gal}{Gal}
\newtheorem{theorem}{Theorem}[section]
\newtheorem{proposition}[theorem]{Proposition}
\newtheorem{lemma}[theorem]{Lemma}
\theoremstyle{definition}
\newtheorem{remark}[theorem]{Remark}
\title{A characterization of $Q$-polynomial association schemes} 
\author{Hirotake Kurihara, Hiroshi Nozaki}
\begin{document}
\maketitle

\renewcommand{\thefootnote}{\fnsymbol{footnote}}
\footnote[0]{2010 Mathematics Subject Classification: 05E30 (51D20).}
\begin{abstract}
We prove a necessary and sufficient condition for a symmetric association
scheme to be a $Q$-polynomial scheme.
\end{abstract}

\textbf{Key words}: $Q$-polynomial association scheme,
$s$-distance set.

\section{Introduction} 
	A {\it symmetric association scheme} of class $d$ is a pair $\mathfrak{X} = (X, \{R_i \}_{i=0}^d)$, 
	where $X$ is a finite set and each $R_i$ is a nonempty subset of $X \times X$ satisfying the following: 
	\begin{enumerate}
	\item $R_0 = \{(x, x) \mid x \in X \}$,
	\item $X \times X = \bigcup_{i=0}^d R_i$ and $R_i \cap R_j$ is empty if $i\ne j$, 
	\item $^tR_i = R_i$ for any $i \in\{ 0, 1,\ldots , d\}$, where $^t R_i = \{(y, x) \mid (x, y) \in R_i\}$, 
	\item for all $i,j,k\in \{0,1,\ldots ,d\}$, there exist integers $p_{i j}^k$ such that for all $x,y \in X$ with $(x,y) \in R_k$, 
	\[ p_{i j}^k= |\{ z \in X \mid (x,z) \in R_i, (z,y) \in R_j \}|.  \]
	\end{enumerate}
	The integers $p_{i j}^k$ are called the {\it intersection numbers}. 
	
	Let $\mathfrak{X}$ be a symmetric association scheme.
	The $i$-th {\it adjacency matrix} $A_i$ of $\mathfrak{X}$ is the matrix with rows and columns indexed by $X$ 
	such that the $(x,y)$-entry is $1$ if $(x,y)\in R_i$ or $0$ otherwise.  
	The {\it Bose--Mesner algebra} of $\mathfrak{X}$ is the algebra generated 
	by the adjacency matrices $\{A_i \}_{i=0}^d$ over the complex field $\mathbb{C}$. 
	Then $\{A_i\}_{i=0}^d$ is a natural basis of the Bose--Mesner algebra. 
	By \cite[page 59]{Bannai-Ito}, the Bose--Mesner algebra 
	has a second basis $\{E_i\}_{i=0}^d$ such that 
	\begin{enumerate}
	\item $E_0=|X|^{-1}J$, where $J$ is the all-ones matrix, 
	\item $I=\sum_{i=0}^d E_i$, where $I$ is the identity matrix, 
	\item $E_i E_j= \delta_{ij}E_i$, where $\delta_{ij}=1$ if $i =j$ and $\delta_{ij}=0$ if $i \ne j$. 
	\end{enumerate}
%	Note that the Bose--Mesner algebra of a symmetric association scheme is commutative. 
%	By the basic theory of linear algebra, 
%	the primitive idempotents $\{E_i\}_{i=0}^d$ form another basis of the algebra. 
%	It is known that there are maximal common eigenspaces $\{V_i\}_{i=0}^d$ 
%	for the adjacency matrices $\{A_i \}_{i=0}^d$, where $V_0$ is spanned by the all one vector. 
%	Let $E_i$ be the orthogonal projection onto $V_i$. 
%	Then $\{E_i \}_{i=0}^d$ becomes another basis of the Bose--Mesner algebra. 
%	We have $E_i E_j= \delta_{ij}E_i$ where $\delta_{ij}=1$ if $i =j$ and $\delta_{ij}=0$ if $i \ne j$, 
%	namely  $\{E_i \}_{i=0}^d$ are the primitive idempotents of the Bose--Mesner algebra. 
	The basis $\{E_i\}_{i=0}^d$ is called the {\it primitive idempotents} of $\mathfrak{X}$. 
	We have the following equations: 
	\begin{align}
	A_i&= \sum_{j=0}^d p_i(j)E_j, \label{eq:basic} \\
	E_i&=\frac{1}{|X|}\sum_{j=0}^d q_i(j) A_j, \\
	A_i A_j&= \sum_{k=0}^d p_{i j}^k A_k, \\ 
	E_i \circ E_j&= \frac{1}{|X|} \sum_{k=0}^d q_{i j}^k E_k,
	\end{align}
	where $\circ$ denotes the {\it Hadamard product}, that is, the entry-wise matrix product. 
	The matrices 
	$P=(p_j(i))^d_{i,j=0}$ and $Q=(q_j(i))^d_{i,j=0}$ are called the first and second {\it eigenmatrices}, respectively. 
	The numbers $q_{i j}^k$ are called the {\it Krein parameters}. 
	The Krein parameters are nonnegative real numbers (the Krein condition) 
	\cite{Scott} \cite[page 69]{Bannai-Ito}. 
	 
	A symmetric association scheme is called a {\it $P$-polynomial scheme} (or a {\it metric scheme}) 
	with respect to the ordering $\{A_i \}_{i=0}^d$ if for each $i \in \{0,1,\ldots, d \}$, 
	there exists a polynomial $v_i$ of degree $i$ such that $p_i(j)=v_i(p_1(j))$ for any $j \in \{0,1,\ldots, d \}$.
	We say a symmetric association scheme is 
a $P$-polynomial scheme with respect to $A_1$
if it
has the $P$-polynomial property
with respect to some ordering
$A_0,A_1, A_{i_2},A_{i_3},\ldots ,A_{i_d}$.
	Dually a symmetric association scheme is called a {\it $Q$-polynomial scheme} (or a {\it cometric scheme}) 
	with respect to the ordering $\{E_i \}_{i=0}^d$ if for each $i \in \{0,1,\ldots, d \}$, 
	there exists a polynomial $v_i^{\ast}$ of degree $i$ 
	such that $q_i(j)=v_i^{\ast}(q_1(j))$ for any $j \in \{0,1,\ldots, d \}$. 
	Moreover a symmetric association scheme is called
a $Q$-polynomial scheme with respect to $E_1$
if it
has the $Q$-polynomial property
with respect to some ordering
$E_0,E_1, E_{i_2},E_{i_3},\ldots ,E_{i_d}$.
Note that both $\{v_i\}^d_{i=0}$ and $\{v_i^{\ast}\}^d_{i=0}$ form systems of orthogonal polynomials.
%Every symmetric association scheme of class $1$ is a ($P$ and $Q$)-polynomial scheme and
%any symmetric association scheme of class $2$ has the structure of 
%a ($P$ and $Q$)-polynomial scheme. 
	
	Throughout this paper, we use the notation $m_i=q_i(0)$ and $\theta_i^{\ast}=q_1(i)$ 
for $0 \leq i \leq d$. 
	 If an association scheme is $Q$-polynomial, then $\{ \theta_i^{\ast} \}_{i=0}^d$ are mutually distinct
because the second eigenmatrix $Q=(v_i^{\ast}(\theta_j^{\ast}))^d_{j,i=0}$ is non-singular.
For a univariate polynomial $f$ and a matrix $M$,
we denote by $f(M^\circ)$ the matrix obtained by substituting $M$
into $f$ with multiplication the Hadamard product.
	We introduce known equivalent conditions of the $Q$-polynomial property
of symmetric association schemes \cite[page 193]{Bannai-Ito}.
	The following are equivalent:
	\begin{enumerate}
	\item $\mathfrak{X}$ is a $Q$-polynomial scheme with respect to 
	the ordering $\{E_i \}_{i=0}^d$. 
	\item $(q_{1,i}^j)^d_{i,j=0}$ is an irreducible tridiagonal matrix.
	\item For each $i \in \{0,1,\ldots,d \}$, there exists a polynomial $f_i$ of degree $i$ 
	such that $E_i=f_i(E_1^{\circ})$. 
	\end{enumerate}
	
	In the present paper, we prove a new necessary and sufficient condition for a symmetric
association scheme to be $Q$-polynomial.
Since the $Q$-polynomial property of a symmetric association scheme of class $1$ is trivial, 
we assume that $d$ is greater than $1$.
	\begin{theorem} \label{main}
	 Let $\mathfrak{X}$ be a symmetric association scheme of class $d\ge 2$. 
Suppose that $\{\theta_j^{\ast}\}^d_{j=0}$ are mutually distinct. 
Then the following are equivalent: 
\begin{enumerate}
	\item $\mathfrak{X}$ is a $Q$-polynomial scheme with respect to $E_1$. 
	\item 
There exists $l \in \{2,3, \ldots ,d\}$ such that
for any $i \in \{1,2,\ldots, d\}$,
	\[
	 \prod_{\substack{j=1 \\ j \ne  i}}^d \frac{\theta_0^\ast-\theta_j^\ast}{\theta_i^\ast-\theta_j^\ast}= -p_i(l). 
	\]
\end{enumerate}
	Moreover if $(2)$ holds, then $l=i_d$.
	\end{theorem}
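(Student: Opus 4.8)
The plan is to recast both conditions as a single statement about the flag of Hadamard powers of $E_1$, and then to analyze that flag through the Krein matrix. Set $\mathcal{E}:=|X|E_1=\sum_{j=0}^d\theta_j^\ast A_j$, so $\mathcal{E}^{\circ k}=\sum_j(\theta_j^\ast)^kA_j$ and, since the $\theta_j^\ast$ are distinct, $A_i=L_i(\mathcal{E}^\circ)$ where $L_i(x)=\prod_{j\ne i}(x-\theta_j^\ast)/(\theta_i^\ast-\theta_j^\ast)$ is the Lagrange interpolant. Write $\mathcal{W}_k:=\mathrm{span}\{\mathcal{E}^{\circ 0},\dots,\mathcal{E}^{\circ k}\}$ (so $\dim\mathcal{W}_k=k+1$ and $\mathcal{W}_d$ is the full Bose--Mesner algebra), and let $\mu_k(l)$ be the eigenvalue of $\mathcal{E}^{\circ k}$ on $E_l$. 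A direct computation gives $G(\theta_0^\ast)\cdot(\text{leading coeff of }L_i)=-\prod_{j\ne i,\,j\ge 1}(\theta_0^\ast-\theta_j^\ast)/(\theta_i^\ast-\theta_j^\ast)$, where $G(x)=\prod_{j=1}^d(x-\theta_j^\ast)$. Since $A_i=L_i(\mathcal{E}^\circ)$ yields the eigenvalue $p_i(l)=\sum_jL_i(\theta_j^\ast)p_j(l)$ of $A_i$ on $E_l$, and $\{L_i\}$ is a basis of polynomials of degree $\le d$, condition $(2)$ is equivalent by linearity to: the functional $f\mapsto(\text{eigenvalue of }f(\mathcal{E}^\circ)\text{ on }E_l)$ equals $f\mapsto G(\theta_0^\ast)\,[x^d]f$. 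Evaluating at $f=x^k$ this says $\mu_k(l)=G(\theta_0^\ast)\delta_{k,d}$, i.e. $\mu_k(l)=0$ for $k<d$, i.e. $E_l\perp\mathcal{W}_{d-1}$ (using $\langle\mathcal{E}^{\circ k},E_l\rangle=\mu_k(l)\,m_l$). This pivot, ``condition $(2)\iff\mathcal{W}_{d-1}=\mathrm{span}\{E_m:m\ne l\}$'', drives everything.

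For $(1)\Rightarrow(2)$: if $\mathfrak{X}$ is $Q$-polynomial for the ordering $E_{i_0},\dots,E_{i_d}$, then $E_{i_k}$ is a Hadamard polynomial of degree exactly $k$ in $E_1$, so $\mathcal{W}_k=\mathrm{span}\{E_{i_0},\dots,E_{i_k}\}$. In particular $\mathcal{W}_{d-1}=\mathrm{span}\{E_m:m\ne i_d\}$, whence $E_{i_d}\perp\mathcal{W}_{d-1}$, and the pivot gives $(2)$ with $l=i_d\in\{2,\dots,d\}$.

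For $(2)\Rightarrow(1)$: regard $T(M):=E_1\circ M$ as a self-adjoint operator for $\langle M,N\rangle=\sum_{x,y}M_{xy}N_{xy}$; its eigenvectors are the $A_j$ with distinct eigenvalues $\theta_j^\ast/|X|$, and the seed $E_0=\tfrac1{|X|}J$ is cyclic, so $\mathcal{W}_k=\mathrm{span}\{E_0,TE_0,\dots,T^kE_0\}$ is precisely the Krylov flag. In the orthogonal idempotent basis (where $\langle E_a,E_b\rangle=\delta_{ab}m_a$), $T$ has matrix $\tfrac1{|X|}(q_{1m}^k)$, which is nonnegative (Krein condition) and symmetrizable; let $\Gamma$ be its support graph on $\{0,\dots,d\}$, connected because $E_0$ is cyclic, and let $\partial$ denote distance in $\Gamma$. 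Condition $(2)$ gives $\langle T^jE_0,E_l\rangle=0$ for all $j\le d-1$; but by nonnegativity $\langle T^jE_0,E_l\rangle>0$ as soon as $j\ge\partial(l)$ (a geodesic $0\to l$ contributes a strictly positive product of Krein parameters and nothing cancels), forcing $\partial(l)=d$. Thus $\Gamma$ is connected of diameter $d$ on $d+1$ vertices, so a geodesic from $0$ to $l$ meets each distance $0,1,\dots,d$ exactly once; the distance layers are singletons, $\Gamma$ is a path, and ordering the idempotents along it makes $(q_{1m}^k)$ irreducible tridiagonal. Hence $\mathfrak{X}$ is $Q$-polynomial with respect to $E_1$, with $l$ the path endpoint at distance $d$, i.e. $l=i_d$.

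The routine parts are the interpolation bookkeeping of the pivot and the forward direction. The main obstacle is the converse: promoting the single orthogonality $E_l\perp\mathcal{W}_{d-1}$ to full $Q$-polynomiality. That relation controls only the top of the flag, and the content is to propagate it downward; the decisive ingredient is the nonnegativity of the Krein parameters, which blocks cancellation along walks and thereby pins $\partial(l)=d$. The two points to nail down rigorously are that cyclicity of $E_0$ yields connectedness of $\Gamma$, and that ``connected, diameter $d$, on $d+1$ vertices'' genuinely forces a path (singleton layers), rather than merely containing a spanning path.
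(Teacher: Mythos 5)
Your proof is correct, and it reaches the paper's conclusion by a recognizably different packaging of the same mechanism. Your pivot --- condition (2) is equivalent to $\mathcal{E}^{\circ k}E_l=0$ for all $k\le d-1$ (with $\mathcal{E}^{\circ d}E_l=G(\theta_0^\ast)E_l\neq0$ then automatic, since $G(\mathcal{E}^\circ)=G(\theta_0^\ast)I$) --- is exactly the computation the paper performs via its Lagrange lemma and the Hadamard identity $\prod_{j=1}^d(|X|E_1-\theta_j^\ast J)/(\theta_0^\ast-\theta_j^\ast)=I$; you just state it once as a two-way equivalence instead of redoing it in each direction, and your $(1)\Rightarrow(2)$ is the paper's argument with $M_i=K_iI+A_i$ rewritten in leading-coefficient form (note the $i=0$ instance of your basis identity, $p_0(l)=1=G(\theta_0^\ast)\,[x^d]L_0$, which makes restricting (2) to $i\ge1$ harmless). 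Where you genuinely diverge is the hard direction: the paper's key lemma uses Krein nonnegativity to prove the propagation ``$N_i=\emptyset\Rightarrow N_{i+1}=\emptyset$,'' so that $N_d\neq\emptyset$ forces every $N_h$ to be a singleton, and then constructs the polynomials $f_h$ directly; you read the identical no-cancellation phenomenon off the support graph $\Gamma$ of the Krein matrix, pin $\partial(0,l)=d$, deduce that the $d+1$ distance layers of a connected graph on $d+1$ vertices with diameter $d$ are singletons so $\Gamma$ is a path, and then invoke the cited criterion that an irreducible tridiagonal $(q_{1m}^k)$ yields the $Q$-polynomial property. The two arguments perform the same counting (your distance layers are precisely the paper's sets $N_h$, since nonnegativity makes ``component of $T^jE_0$'' the same as ``reachable in $j$ steps''), but your endgame via tridiagonality is slicker at the price of leaning on the known equivalence, whereas the paper's construction of the $f_h$ is self-contained. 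Two small repairs: your claim that $\langle T^jE_0,E_l\rangle>0$ for \emph{all} $j\ge\partial(l)$ can fail for parity reasons when $\Gamma$ is bipartite (e.g.\ the $n$-gon schemes, where walk lengths from $0$ to $l$ have fixed parity), but the argument only needs positivity at $j=\partial(l)$, where the geodesic contributes a strictly positive term; and to conclude ``$Q$-polynomial with respect to $E_1$'' rather than merely with respect to some ordering, you should note that the path's second vertex is forced to be $1$, which is immediate from $TE_0=|X|^{-1}E_1$, i.e.\ $q_{1,0}^k=\delta_{k,1}$.
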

	
	\begin{remark}
	We call a finite set $X$ in $\mathbb{R}^m$ a {\it $d$-distance set} if the number of the Euclidean distances 
	between distinct two points in $X$ is equal to $d$. 
	Larman--Rogers--Seidel \cite{Larman-Rogers-Seidel} proved that if the size of a two-distance set with the distances $a,b$ ($a<b$) is greater
than $2m+3$, then there exists a positive integer $k$ such that $a^2/b^2=(k-1)/k$, 
{\it i.e.} $k=b^2/(b^2-a^2)$. 
Bannai--Bannai \cite{Bannai-Bannai} proved that the ratio $k$ of the spherical embedding of a primitive association scheme of class $2$ coincides with $-p_i(2)$. 
The research of the present paper is motivated by \cite{Bannai-Bannai}. 
For a symmetric association scheme satisfying that $\{\theta_j^{\ast}\}^d_{j=0}$ are mutually distinct,
the values $K_i:=\prod_{j=1, j \ne  i}^d (\theta_0^\ast-\theta_j^\ast)(\theta_i^\ast-\theta_j^\ast)^{-1}$ ($1\leq i \leq d$) are the 
generalized Larman--Rogers--Seidel's ratios \cite{Nozaki} of the spherical embedding of this association scheme with respect to $E_1$. 
Theorem~\ref{main} is an extension of Bannai--Bannai's result to $Q$-polynomial schemes of any class. 
Furthermore Theorem~\ref{main} is a new characterization of the $Q$-polynomial property on
 the spherical embedding of a symmetric association scheme.  
	\end{remark}
	At the end of this paper, we give some sufficient conditions for the integrality of $K_i$.

\section{Proof of Theorem \ref{main}}
	
	First we give several lemmas that will be needed to 
prove Theorem \ref{main}. 
	
	\begin{lemma} \label{sum_Ki_2}
	For any mutually distinct real numbers $\beta_1,\beta_2, \ldots, \beta_s$, 
the following identity holds.  
\[\sum_{i=1}^s \beta_i^j \prod_{\substack{k=1\\ k \ne  i}}^s \frac{x-\beta_k}{\beta_i-\beta_k}=x^j\] for any $j \in \{0,1, \ldots ,s-1\}$, 
where $x$ is a variable. 

	\end{lemma}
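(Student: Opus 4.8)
The plan is to read this identity as the statement that Lagrange interpolation reproduces polynomials of low degree. Writing $L_i(x) = \prod_{k \ne i} (x - \beta_k)/(\beta_i - \beta_k)$ for the $i$-th Lagrange basis polynomial attached to the mutually distinct nodes $\beta_1, \ldots, \beta_s$, the left-hand side is exactly $\sum_{i=1}^s \beta_i^j L_i(x)$, which is the unique polynomial of degree at most $s-1$ interpolating the function $x \mapsto x^j$ at those nodes. Since $x^j$ is itself a polynomial of degree $j \le s-1$, it must coincide with its own interpolant, and the identity follows. I would make this uniqueness argument explicit rather than cite it, to keep the lemma self-contained.

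First I would record the two elementary facts about $L_i$ that drive everything: (a) each $L_i$ is a polynomial in $x$ of degree $s-1$, so any linear combination $\sum_i c_i L_i(x)$ has degree at most $s-1$; and (b) the Kronecker-delta property $L_i(\beta_m) = \delta_{im}$, which holds because for $m \ne i$ the factor indexed by $k = m$ has vanishing numerator $\beta_m - \beta_m$, while for $m = i$ every factor equals $1$. Both facts use only that the $\beta_k$ are mutually distinct, so that no denominator vanishes.

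Next I would set $G(x) := \sum_{i=1}^s \beta_i^j L_i(x) - x^j$ and show $G \equiv 0$. By (a) together with the hypothesis $j \le s-1$, the function $G$ is a polynomial in $x$ of degree at most $s-1$. Evaluating at any node $\beta_m$ and invoking (b) gives $\sum_i \beta_i^j L_i(\beta_m) = \beta_m^j$, so $G(\beta_m) = 0$ for every $m \in \{1, \ldots, s\}$. Thus $G$ has $s$ distinct roots yet degree at most $s-1$, which forces $G$ to be identically zero; this is precisely the claimed identity.

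The only real obstacle is bookkeeping on the degree: the entire argument collapses if $x^j$ is allowed to have degree exceeding $s-1$, since then $G$ need not vanish identically. This is exactly where the restriction $j \in \{0, 1, \ldots, s-1\}$ enters, and it is worth flagging that for $j \ge s$ the same sum instead computes the degree-$(s-1)$ interpolant of $x^j$, not $x^j$ itself. Everything else is routine.
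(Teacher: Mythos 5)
Your proof is correct and takes essentially the same route as the paper: both identify the left-hand side as the Lagrange interpolant of $x^j$ at the nodes $\beta_1,\ldots,\beta_s$ and conclude equality from uniqueness of the degree-at-most-$(s-1)$ interpolant. You merely spell out the delta property $L_i(\beta_m)=\delta_{im}$ and the root-counting step that the paper leaves to a citation, which is a reasonable way to keep the lemma self-contained.
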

	
	\begin{proof}
	For each $j\in \{0,1,\ldots,s-1\}$, the polynomial 
\[L_j(x):=\sum^s_{i=1}\beta_i^j\prod_{\substack{k=1\\ k \ne  i}}^s\frac{x-\beta_k}{\beta_i-\beta_k}\] 
	of degree at most $s-1$ is known as the interpolation polynomial in the Lagrange form (see \cite{lag}). 
	Namely, the property $L_j(\beta_i)=\beta_i^j$ holds for any $i\in \{1,2,\ldots ,s\}$.  
	Therefore $L_j(x)=x^j$, and the lemma follows.
	\end{proof}
	We say $E_j$ is a {\it component} of an element $M$ of the Bose--Mesner algebra 
	if $E_jM\neq0$. 
	Let $N_h$ denote the set of indices $j$ such that $E_j$ is a component of $E_1^{\circ h}$ 
	but not of $E_1^{\circ l}$ $(0 \leq l \leq h-1)$.  Note that $N_0=\{0\}$ and $N_1=\{1\}$.  
	
	\begin{lemma} \label{key}
	Suppose $\mathfrak{X}$ is a symmetric association scheme of class $d \ge 2$. Then the following are equivalent. 
	\begin{enumerate}
	\item $\mathfrak{X}$ is a $Q$-polynomial scheme with respect to $E_1$. 
	\item The cardinality of $N_d$ is equal to $1$. 
	\item $N_d$ is nonempty. 
	\end{enumerate}
	\end{lemma}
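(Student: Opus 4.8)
The plan is to prove the cycle $(1)\Rightarrow(2)\Rightarrow(3)\Rightarrow(1)$, of which only the last is substantial; $(2)\Rightarrow(3)$ is immediate. For $(1)\Rightarrow(2)$ I would invoke the equivalent condition recalled in the introduction: $Q$-polynomiality with respect to an ordering $E_0,E_1,E_{i_2},\ldots,E_{i_d}$ amounts to the existence of polynomials $f_h$ of degree $h$ with $E_{i_h}=f_h(E_1^\circ)$. Writing $f_h(E_1^\circ)=\sum_{l=0}^h a_l E_1^{\circ l}$ with $a_h\neq0$ and inducting on $h$, the span of $E_1^{\circ 0},\ldots,E_1^{\circ h}$ equals the span of $E_{i_0},\ldots,E_{i_h}$; hence every component of $E_1^{\circ h}$ lies in $\{i_0,\ldots,i_h\}$, while the coefficient of $E_{i_h}$ in $E_1^{\circ h}$ is $a_h^{-1}\neq0$. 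Thus $N_h=\{i_h\}$ for every $h$, and in particular $\lvert N_d\rvert=1$.

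For the key direction $(3)\Rightarrow(1)$ the idea is to track the components of the Hadamard powers through the Krein parameters and to use their nonnegativity to forbid cancellation. Write $E_1^{\circ h}=\sum_k c_k^{(h)}E_k$. From $E_1^{\circ 0}=J=|X|E_0$ and the recursion obtained by Hadamard-multiplying by $E_1$,
\[ c_m^{(h+1)}=\frac{1}{|X|}\sum_{k=0}^d c_k^{(h)}\,q_{1,k}^{m}, \]
an induction using the Krein condition $q_{1,k}^{m}\ge0$ shows $c_k^{(h)}\ge0$ for all $h,k$. Consequently no cancellation occurs, and if $B_h:=\{k:c_k^{(h)}\neq0\}$ is the set of components of $E_1^{\circ h}$, then $B_{h+1}=\Gamma(B_h)$, where $\Gamma(B):=\{m:q_{1,k}^{m}>0\text{ for some }k\in B\}$ is the neighborhood operator of the graph on $\{0,\ldots,d\}$ with $k\sim m$ iff $q_{1,k}^{m}>0$; this relation is symmetric because $m_m q_{1,k}^{m}=m_k q_{1,m}^{k}$. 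Note $B_0=\{0\}$, $B_1=\{1\}$, and $S_h:=\bigcup_{l\le h}B_l=N_0\cup\cdots\cup N_h$.

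The engine is the identity $\Gamma(S_h)=\bigcup_{l=1}^{h+1}B_l\subseteq S_{h+1}$, which gives a stabilization lemma: if $S_h=S_{h+1}$, then $S_h$ is closed under $\Gamma$ and hence $S_h=S_{h+1}=\cdots=S_d$. Assuming $(3)$, i.e. $N_d\neq\emptyset$, we have $S_{d-1}\subsetneq S_d$, so no stall can occur at an earlier step; hence $S_0\subsetneq S_1\subsetneq\cdots\subsetneq S_d$ strictly. As there are only $d+1$ indices, each $\lvert N_h\rvert=1$, so writing $N_h=\{i_h\}$ (with $i_0=0$, $i_1=1$) produces an ordering $i_0,\ldots,i_d$ of all indices with $S_h=\{i_0,\ldots,i_h\}$.

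Finally I would verify that this ordering makes $(q_{1,i}^{j})$ irreducible tridiagonal, which by the introduction is precisely $Q$-polynomiality with respect to $E_1$. For the off-band vanishing: if $i_a\sim i_b$ with $a\le b$, then $i_b\in\Gamma(S_a)\subseteq S_{a+1}$, so $b\le a+1$, and symmetry gives $\lvert a-b\rvert\le1$. For irreducibility: since $i_{a+1}\in N_{a+1}\subseteq B_{a+1}=\Gamma(B_a)$ with $B_a\subseteq S_a=\{i_0,\ldots,i_a\}$, some $i_c$ with $c\le a$ satisfies $i_c\sim i_{a+1}$, and the tridiagonal bound forces $c=a$, whence $q_{1,i_a}^{i_{a+1}}>0$. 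I expect the main obstacle to be this $(3)\Rightarrow(1)$ direction, and within it the crucial step is the positivity/no-cancellation argument, since it is what converts the linear-algebraic growth of the Hadamard powers into the combinatorial neighborhood dynamics that pin down the tridiagonal ordering.
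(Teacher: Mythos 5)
Your proof is correct and follows essentially the same route as the paper: the $(1)\Rightarrow(2)$ triangular-inversion argument is the paper's, and your $(3)\Rightarrow(1)$ engine --- Krein nonnegativity forcing no cancellation in the Hadamard powers, so that a stall in the growth of $\bigcup_{l\le h}N_l$ propagates to step $d$, followed by counting to get $|N_h|=1$ --- is exactly the paper's ``$N_i=\emptyset\Rightarrow N_{i+1}=\emptyset$'' argument in graph-theoretic dress. The only (harmless) divergence is at the very end, where you certify $Q$-polynomiality via the irreducible-tridiagonal condition (using $m_m q_{1,k}^m=m_k q_{1,m}^k$) while the paper constructs the polynomials $f_h$ with $f_h(E_1^{\circ})=E_{i_h}$; both are items on the same equivalence list quoted in the introduction.
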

	\begin{proof}
	$(2)\Rightarrow (3)$: Clear. \\
	$(1) \Rightarrow (2)$: 
	Without loss of generality, we assume that $\mathfrak{X}$ is a $Q$-polynomial scheme with respect to $\{E_i\}^d_{i=0}$.
	By noting that $\{ \theta_i^{\ast} \}_{i=0}^{d}$ are mutually distinct, 
$\{ E_1^{\circ i} \}_{i=0}^d$ are linearly independent, and a basis of the Bose--Mesner algebra. 
We have 
\[
E_i=f_i(E_1^{\circ})= \sum_{j=0}^i \alpha_{i,j} E_1^{\circ j}, 
\]
where $\alpha_{i,j}\in \mathbb{R}$ are the coefficients of a polynomial $f_i$ of degree $i$.
The upper triangular matrix $(\alpha_{i,j})^d_{i,j=0}$ is non-singular because $\alpha_{i,i} \ne 0$ for each $i$. 
Since the inverse matrix $(\alpha'_{i,j})^d_{i,j=0}$ of $(\alpha_{i,j})^d_{i,j=0}$ is also an upper triangular matrix with $\alpha'_{i,i} \ne 0$ for each $i$, 
we can express 
\[
E_1^{\circ i}= \sum_{j=0}^i \alpha'_{i,j} E_j. 
\] 
Therefore $(2)$ follows. \\
$(3) \Rightarrow (1)$:  
First we prove that if $N_{i}$ is empty for some $i\in\{1,2,\ldots,d-1\}$, then $N_{i+1}$ is also empty.  
Let $\mathcal{I}=\cup_{j=0}^{i-1} N_j$. 
We consider the expression  
$
\sum_{j =0}^{i-1} E_1^{\circ j} = \sum_{j \in \mathcal{I}} \beta_j E_j 
$.
Note that $\beta_j>0$ for any $j\in \mathcal{I}$ by the Krein condition. 
Then we have 
\[
E_1 \circ (\sum_{h =0}^{i-1} E_1^{\circ h}) = \sum_{j \in \mathcal{I}} \beta_j \sum_{k=0}^d q_{1,j}^k E_k
 = \sum_{k=0}^d \sum_{j \in \mathcal{I}} \beta_j q_{1,j}^k E_k.
\] 
If $N_{i}$ is empty, then 
\begin{equation} \label{q}
q_{1,j}^k=0 \text{ for any $j\in \mathcal{I}$ and any $k \not\in \mathcal{I}$}
\end{equation}
because $\beta_j>0$ holds for any $j\in \mathcal{I}$. 
We can express $E_1^{\circ i}= \sum_{j \in \mathcal{I}} \beta_j' E_j $,
where $\beta_j'$ are non-negative integers for any $j\in \mathcal{I}$.
By \eqref{q} and the equalities 
\[
E_1^{\circ (i+1)}=E_1 \circ E_1^{\circ i}= E_1 \circ \sum_{j \in \mathcal{I}} \beta_j' E_j=  \sum_{k=0}^d \sum_{j \in \mathcal{I}} \beta_j' q_{1,j}^k E_k, 
\]
we obtain $\sum_{j \in \mathcal{I}} \beta_j' q_{1,j}^k=0$ for $k \not\in \mathcal{I}$. 
Hence $N_{i+1}$ is also empty. 
	This means that if $N_d$ is not empty, then the cardinalities of $N_h$ is equal to $1$ for any $h \in \{0,1,\ldots,d \}$. 
	Put $N_h=\{i_h\}$ and order $E_0,E_1,E_{i_2},E_{i_3},\ldots, E_{i_d}$. 
	Then we can construct polynomials $f_h$ of degree $h$
such that $f_h(E_1^{\circ})=E_{i_h}$ for any $h \in \{0,1,\ldots, d\}$.
	Hence $(1)$ follows. 
	\end{proof}

%	The following theorem is a necessary condition for a symmetric association
%scheme to be $Q$-polynomial.
Now we prove Theorem \ref{main}. 
\begin{proof}[Proof of Theorem \ref{main}]
(1) $\Rightarrow$ (2):
Without loss of generality, we assume that $\mathfrak{X}$ is a $Q$-polynomial scheme with respect to $\{E_i\}^d_{i=0}$.
For each $i \in \{1,2,\ldots ,d \}$, we define the polynomial   
	\[
	F_i(t):=\prod_{\substack{j=1\\ j \ne i}}^d \frac{|X|t- \theta^{\ast}_j }{\theta^{\ast}_i-\theta^{\ast}_j}
	\]
	of degree $d-1$. 
	Set $M_i=F_i(E_1^{\circ})$. Then $|X|E_1=\sum_{j=0}^d \theta_j^\ast A_j$ yields that the $(x,y)$-entries of $M_i$ are
	\[
	M_i(x,y)=\begin{cases}
	K_i \text{\qquad if $(x,y) \in R_0$},\\ 
	1 \text{\qquad if $(x,y) \in R_i$},\\
	0 \text{\qquad otherwise},
	\end{cases}
	\]
	where $K_i:=\prod_{j=1, j \ne  i}^d (\theta_0^\ast-\theta_j^\ast)(\theta_i^\ast-\theta_j^\ast)^{-1}$.
	%Therefore we can write 
	%\[
	%M_i=K_i I+A_i. 
	%\]
	Since $F_i$ is a polynomial of degree $d-1$, the matrix $M_i$ is a linear combination of $\{ E_i \}_{i=0}^{d-1}$. 
This means that $M_i E_d=0$. By \eqref{eq:basic}, 
\[
0=M_i E_d=(K_iI+A_i)E_d=(K_i+p_i(d))E_d  
\]
for any $i\in \{1,2,\ldots,d\}$. Therefore the desired result follows. 

(2) $\Rightarrow$ (1): 
%We can express 
%	\[
%	(|X|E_1)^{\circ j}= \sum_{i=0}^d (\theta^{\ast}_i)^j A_i.
%	\]
%	By our assumption, $A_i$ has the eigenvalue 
%	\[
%	-K_i=-\prod_{j=1,2,\ldots ,d, j \ne  i} \frac{\theta_0^\ast-\theta_j^{\ast}}{\theta_i^{\ast}-\theta_j^{\ast}}
%	\]
%	on the eigenspace $V_l$. 
From the equation $A_i= \sum_{j=0}^d p_i(j)E_j$ and our assumptions, we have 
\[A_iE_l= p_i(l)E_l=-K_iE_l.\] 
By Lemma \ref{sum_Ki_2}, 
	\[
	(|X|E_1)^{\circ j} E_l=\bigr( (\theta_0^\ast)^j I + \sum_{i=1}^d (\theta^{\ast}_i)^j A_i \bigr) E_l=
	\bigr( (\theta_0^\ast)^j  - \sum_{i=1}^d (\theta^{\ast}_i)^j K_i \bigr) E_l=0
	\]
	for any $j \leq d-1$. This means that $l$ is not an element of $N_j$ for any $j \leq d-1$. 
	Note that the following equality holds:  
	\begin{equation*} 
	\prod^{d}_{j=1}\frac{|X|E_1-\theta^\ast_j J}{\theta_0^\ast-\theta^\ast_j}=I,    
	\end{equation*}
where the multiplication is the Hadamard product.
Obviously, $I$ has $E_l$ as a component.
Since $l \notin N_i$ for any $i\in \{0,1,\ldots ,d-1\}$, 
we have $l \in N_d$. By Lemma \ref{key}, the desired result follows. 
\end{proof}

\section{Integrality of $K_i$}
	In this section, we consider when 
$K_i=-p_i(d)$ is an integer for each $i \in \{1,2,\ldots,d \}$
for a $Q$-polynomial scheme. 
	The following theorem is important in this section. 
	\begin{theorem}[Suzuki \cite{Suzuki}] \label{Suzuki}
	Let $\mathfrak{X}$ with $m_1>2$ be a $Q$-polynomial scheme with respect to the ordering $\{E_i\}_{i=0}^d$. 
Suppose $\mathfrak{X}$ is $Q$-polynomial with respect to another ordering. Then the new ordering
is one of the following:
\begin{enumerate}
	\item $E_0,E_2,E_4,E_6, \ldots , E_5,E_3,E_1$, 
	\item  $E_0,E_d,E_1,E_{d-1},E_2,E_{d-2},E_3,E_{d-3}, \ldots $, 
	\item  $E_0,E_d,E_2,E_{d-2},E_4,E_{d-4}, \ldots , E_{d-5},E_5,E_{d-3},E_3,E_{d-1},E_1$, 
	\item  $E_0,E_{d-1},E_2,E_{d-3},E_4,E_{d-5}, \ldots , E_5,E_{d-4},E_3,E_{d-2},E_1,E_d$, or 
	\item  $d = 5$ and $E_0,E_5,E_3,E_2,E_4,E_1$.
	\end{enumerate}
	\end{theorem}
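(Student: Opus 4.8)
The plan is to reduce the existence of a second $Q$-polynomial ordering to a numerical condition on the dual eigenvalues $\theta_i^{\ast}=q_1(i)$ and then to classify the admissible reorderings case by case, dualizing the corresponding analysis for $P$-polynomial schemes. Write the second ordering as $E_0,E_{\sigma_1},E_{\sigma_2},\ldots ,E_{\sigma_d}$. Its dual eigenvalue sequence is $\psi_j:=q_{\sigma_1}(\sigma_j)$, and since $\mathfrak{X}$ is already $Q$-polynomial with respect to $\{E_i\}_{i=0}^d$ we may substitute $q_{\sigma_1}(k)=v_{\sigma_1}^{\ast}(\theta_k^{\ast})$, so that $\psi_j=v_{\sigma_1}^{\ast}(\theta_{\sigma_j}^{\ast})$. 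Using the equivalence recorded in the introduction (irreducible tridiagonal Krein matrix), the second ordering is $Q$-polynomial exactly when $\{\psi_j\}_{j=0}^d$ is again a \emph{standard sequence}, that is, satisfies the balanced three-term recurrence in which $(\psi_{j-2}-\psi_{j+1})/(\psi_{j-1}-\psi_j)$ is independent of $j$.

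The engine of the classification is that the \emph{original} sequence $\{\theta_i^{\ast}\}$ already satisfies such a recurrence. Setting $\delta_i=\theta_{i-1}^{\ast}-\theta_i^{\ast}$, constancy of the ratio gives $\delta_{i+1}=(c-1)\delta_i-\delta_{i-1}$, whose characteristic roots multiply to $1$; solving this yields the usual trichotomy for the shape of $\{\theta_i^{\ast}\}$: a $q$-geometric form $\theta_i^{\ast}=a+bq^i+cq^{-i}$ with $q\neq\pm 1$, and the two degenerations where the roots coincide (a quadratic-in-$i$ form for $q=1$ and an oscillating form for $q=-1$). I would first show that, in order for the transformed sequence $\psi_j=v_{\sigma_1}^{\ast}(\theta_{\sigma_j}^{\ast})$ to satisfy the same type of recurrence, the degree $\sigma_1$ of the generating polynomial is forced into $\{2,d-1,d\}$; intuitively the new first idempotent must sit near an end of the original ordering. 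Feeding each value of $\sigma_1$ into the trichotomy then pins down the remaining reordering: the reversal $i\mapsto d-i$ together with the symmetry $q\leftrightarrow q^{-1}$ produces the zigzag orderings (2)--(4), while the degree-$2$ case combined with the quadratic and oscillating shapes produces the even--odd ordering (1).

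The hardest part will be twofold. First, proving that the enumerated orderings exhaust all possibilities requires ruling out every permutation outside the listed families; this is a careful case analysis organized by the value of $q$ (root of unity versus generic), using the distinctness of the $\theta_i^{\ast}$ throughout and invoking the hypothesis $m_1>2$ precisely to discard the highly symmetric low-rank situations (for instance polygon-type schemes) in which spurious extra orderings would otherwise survive. Second, the sporadic ordering (5) for $d=5$ belongs to none of the infinite families: it arises from an accidental coincidence among the six dual eigenvalues for one special value of $q$, and must be produced and verified by a direct computation, while one simultaneously checks that no analogous coincidence occurs for any other $d$. Isolating this exceptional case cleanly, and certifying that the general argument leaves exactly this one additional ordering, is the crux of the proof.
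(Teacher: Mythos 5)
The paper does not prove this statement at all: it is imported verbatim from Suzuki \cite{Suzuki}, so the benchmark for your attempt is Suzuki's proof. That proof dualizes Bannai--Ito's classification of multiple $P$-polynomial structures, and its engine is \emph{combinatorial}: from the given $Q$-polynomial ordering one knows the vanishing pattern of the Krein parameters ($q_{ij}^k=0$ unless $|i-j|\le k\le i+j$, with nonvanishing at $k=|i-j|$ and at $k=i+j\le d$), and for each candidate new first idempotent $E_e$ one examines the representation diagram on $\{0,1,\ldots,d\}$ with $i\sim j$ iff $q_{ei}^j\ne 0$. A reordering is $Q$-polynomial with respect to $E_e$ precisely when this diagram is a path traversed in that order; forcing the diagram to be a path pins $e$ down to a few values near the ends and yields exactly the orderings (1)--(5), with the sporadic $d=5$ case (5) emerging from the diagram analysis. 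The hypothesis $m_1>2$ excludes the ordinary $n$-gons. Your instincts that $\sigma_1$ is forced into $\{2,d-1,d\}$, that $m_1>2$ kills the polygon-type degeneracies, and that one should dualize the $P$-polynomial analysis are all correct --- but the $P$-polynomial analysis to dualize is Bannai--Ito's intersection-number argument, not an eigenvalue-recurrence argument.

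The genuine gap is in your stated engine, and it is fatal as written. You assert that the dual eigenvalue sequence $\{\theta_i^\ast\}$ of a $Q$-polynomial scheme satisfies the balanced three-term recurrence in which $(\theta_{i-2}^\ast-\theta_{i+1}^\ast)/(\theta_{i-1}^\ast-\theta_i^\ast)$ is constant, whence the trichotomy $\theta_i^\ast=a+bq^i+cq^{-i}$ and its degenerations. That constancy is Terwilliger's ``balanced'' condition, a theorem about schemes that are \emph{both} $P$- and $Q$-polynomial (the Leonard-system setting); for a scheme that is merely $Q$-polynomial the sequence $\{\theta_i^\ast\}$ obeys no such recurrence, so your classification never gets off the ground. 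Worse, your initial reduction is also invalid: whether a reordering $E_0,E_{\sigma_1},\ldots,E_{\sigma_d}$ is $Q$-polynomial is the condition that the matrix $(q_{\sigma_1,\sigma_i}^{\sigma_j})_{i,j=0}^d$ be irreducible tridiagonal --- a condition on the Krein parameters, i.e.\ on triple products of columns of $Q$ --- and this cannot be recast as a recurrence satisfied by the single sequence $\psi_j=q_{\sigma_1}(\sigma_j)$ alone. Consequently the case analysis in $q$, and the account of the exceptional ordering (5) as ``an accidental coincidence among the six dual eigenvalues for one special value of $q$,'' rest on a characterization that does not hold; in Suzuki's proof, case (5) is an irreducible outcome of the Krein-diagram analysis at $d=5$, not a numerical coincidence. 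To repair the attempt you would need to abandon the recurrence reduction entirely and work with the vanishing pattern of the $q_{ei}^j$ as above.
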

	Note that $Q$-polynomial schemes with $m_1=2$
are the ordinary $n$-gons as distance-regular graphs.

	\begin{proposition}
	Let $\mathfrak{X}$ with $m_1>2$ be a $Q$-polynomial association scheme with respect to the ordering $\{ E_i \}_{i=0}^d$. 
If there exists $t$ such that $t\leq d/2$, $t \equiv 1 \pmod 2$ and 
$m_t\ne m_{d-t+1}$, then 
$K_j$ is an integer for any $j$. 
	\end{proposition}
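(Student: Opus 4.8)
The plan is to pass from integrality to rationality and then run a Galois argument controlled by Suzuki's classification. Since $K_j=-p_j(d)$ and $p_j(d)$ is an eigenvalue of the integer matrix $A_j$, each $K_j$ is an algebraic integer, so it suffices to show that every $K_j$ is rational. Let $G=\Gal(\mathbb{K}/\mathbb{Q})$, where $\mathbb{K}$ is a finite Galois extension of $\mathbb{Q}$ containing all entries of $P$ (for instance the splitting field of the characteristic polynomials of the $A_i$). Applying any $\sigma\in G$ entrywise to matrices is a ring homomorphism fixing every $A_i$ (these have integer entries) and commuting with the Hadamard product, so it permutes the primitive idempotents: $\sigma(E_j)=E_{\pi_\sigma(j)}$ with $\pi_\sigma(0)=0$. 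Taking traces and using $m_j=\operatorname{tr}(E_j)\in\mathbb{Z}$ gives $m_{\pi_\sigma(j)}=m_j$ for all $j$. Thus it is enough to prove $\pi_\sigma(d)=d$ for every $\sigma$, for then each $p_j(d)$ is $G$-fixed, hence rational.

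The key step is that Galois conjugation carries $Q$-polynomial orderings to $Q$-polynomial orderings. Writing $E_i=f_i(E_1^{\circ})$ and using that $\sigma$ commutes with the Hadamard product, one obtains $\sigma(E_i)=f_i^{\sigma}(\sigma(E_1)^{\circ})$, where $f_i^{\sigma}$ is $f_i$ with $\sigma$-conjugated coefficients; hence $E_0,E_{\pi_\sigma(1)},\ldots,E_{\pi_\sigma(d)}$ is again a $Q$-polynomial ordering, now with respect to $E_{\pi_\sigma(1)}$. Since $m_1>2$, Theorem~\ref{Suzuki} forces this image ordering to be either the original ordering $E_0,E_1,\ldots,E_d$ or one of the five listed forms.

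I would then argue by cases on which form the image ordering takes. In the original ordering and in form $(4)$ the last idempotent is $E_d$, so $\pi_\sigma(d)=d$ and nothing more is needed. For each of forms $(1),(2),(3),(5)$ I would read off the underlying permutation $\nu$ (so that $\pi_\sigma$ equals $\nu$) and chain together the resulting equalities $m_k=m_{\nu_k}$. The aim is to show that each of these forms forces $m_t=m_{d-t+1}$ for every odd $t$ with $t\le d/2$, while imposing no such constraint at even positions, which is precisely why the hypothesis is phrased for odd $t$. This contradicts the assumption that $m_t\ne m_{d-t+1}$ for some odd $t\le d/2$, ruling out forms $(1),(2),(3),(5)$ as Galois images and leaving $\pi_\sigma(d)=d$ for all $\sigma$.

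The main obstacle is exactly this combinatorial verification: forms $(2),(3),(4)$ have interleaving patterns whose precise shape depends on the parity of $d$, so the chains $m_k=m_{\nu_k}$ must be traced separately according to the residue of $d$, and form $(5)$ must be checked directly in the single case $d=5$. Once each of these forms is shown to force odd-position palindromy of the multiplicity sequence, the rationality of every $p_j(d)$, and hence the integrality of $K_j=-p_j(d)$, follows at once.
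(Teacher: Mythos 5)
Your skeleton matches the paper's: the reduction of integrality to rationality via algebraic integrality of $p_j(d)$, the entry-wise Galois action permuting the idempotents, the observation that conjugation carries a $Q$-polynomial ordering to a $Q$-polynomial ordering (your version with $f_i^{\sigma}$ is even slightly more careful than the paper's ``same polynomials $f_i$''), and the appeal to Suzuki's classification are all exactly as in the paper, and your treatment of forms $(3)$, $(4)$, $(5)$ is sound --- indeed for $(4)$ you correctly note no contradiction is needed since it fixes $E_d$. But the multiplicity-chain plan genuinely fails for forms $(1)$ and $(2)$. The only information the trace argument yields is that the multiplicity sequence is constant on the orbits of the permutation $\nu$, and for forms $(1)$ and $(2)$ the orbit of an odd $t$ need not contain $d-t+1$. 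Concretely, for $d=7$ form $(1)$ is $E_0,E_2,E_4,E_6,E_7,E_5,E_3,E_1$, with underlying permutation $\nu=(1\,2\,4\,7)(3\,6)(5)$; the chained equalities are $m_1=m_2=m_4=m_7$ and $m_3=m_6$, which are perfectly consistent with $m_3\neq m_5$, i.e., with the hypothesis instantiated at the odd value $t=3\leq d/2$. Yet this form sends $E_d$ to position $4$ and has $\pi_\sigma(d)=1\neq d$, so your case analysis cannot conclude. Form $(2)$ for $d=7$ gives $\nu=(1\,7\,4\,2)(3\,6)(5)$ and fails the same way. The parity hypothesis is tailored to form $(3)$, which swaps exactly the odd positions $t\leftrightarrow d-t+1$ and fixes the even ones (and to $(5)$ via $t=1$); it has no bite against $(1)$ and $(2)$.

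The ingredient you are missing is the Martin--Williford theorem, which the paper invokes at the outset: for a $Q$-polynomial scheme with $m_1>2$, the Galois group of the splitting field acts faithfully on $\{E_i\}_{i=0}^d$ and has order at most $2$. Hence the relevant $\sigma$ is an involution and $\pi_\sigma$ must have order at most $2$, whereas the permutations underlying forms $(1)$ and $(2)$ have order greater than $2$ whenever $d>2$ (in form $(1)$, $E_1\mapsto E_2\mapsto E_4$ or $E_3$); this, plus the Bannai--Bannai result for $d=2$, is how the paper eliminates these two forms. One could imagine repairing your argument without Martin--Williford by observing that $\pi_{\sigma^2}=\pi_\sigma^2$ must again be the identity or one of Suzuki's forms and checking that the squares of forms $(1)$ and $(2)$ never are, but that verification is absent from your proposal and is not the paper's route. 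As written, the proof is incomplete precisely at forms $(1)$ and $(2)$.
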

	\begin{proof}
	Let $\mathbb{F}$ be the splitting field of the scheme, 
generated by the entries of the first eigenmatrix $P$. 
Then $\mathbb{F}$ is a Galois extension of the rational field. 
Let $G$ be the Galois group $\Gal (\mathbb{F}/\mathbb{Q})$. 
We consider the action of $G$ on the primitive idempotents 
$E_i$, where elements of $G$ are applied entry-wise.  
Then the action of $G$ on $\{E_i\}_{i=0}^d$ is faithful and $|G| \leq 2$ \cite{Martin-Williford}. 
 
Suppose $K_j$ is not an integer for some $j$. 
Since $-K_j=p_j(d)$ is an eigenvalue of $A_j$, 
$K_j$ is an algebraic integer. 
By the basic number theory, $K_j$ is irrational.  
Therefore $|G|\ne 1$ and hence $|G|=2$. 
Let $\sigma$ be the non-identity element of $G$.  
%Since $K_j$ is the ratio of the spherical embedding with respect to $E_1$, 
%we have $E_1^{\sigma} \ne E_1$. 
From the definition of $K_j$, $E_1$ must have an irrational entry, and $E_1^{\sigma} \ne E_1$.  
Therefore $\{E_i^{\sigma} \}_{i=0}^d$ is another $Q$-polynomial ordering with 
the same polynomials $f_i$. 
Hence $\{E_i^{\sigma} \}_{i=0}^d$ coincides with one of (1)--(5) in Theorem \ref{Suzuki}. 

For $d=2$, it is known that $K_i$ is an integer for each $i=1,2$ if 
$m_1 \ne m_2$ \cite{Bannai-Bannai}. 
For (1) and (2) with $d>2$, $(E_1^{\sigma})^{\sigma} \ne E_1$, 
this contradicts that $\sigma^2$ is the identity. 
%For $d=2$, both (1) and (2) coincide with (3). 
%Since $-K_j$ is an eigenvalue of $A_j$ on the eigenspace $V_d$, we have $E_d^{\sigma}\ne E_d$.
Since $p_j(d)$ is irrational and $A_j E_d=p_j(d) E_d$, $E_d$ has an irrational entry. 
Therefore  $E_d^{\sigma} \ne E_d$.
For (4), $\sigma$ fixes $E_d$, a contradiction. 
Therefore the ordering $\{E_i^{\sigma}\}_{i=0}^{d}$ coincides with (3) or (5).

Suppose that there exists $t$ such that $t\leq  d/2$, $t \equiv 1 \pmod 2$ and 
$m_t\ne m_{d-t+1}$. 
Since $E_t\circ I= (m_t/|X|) I$, 
we have $E_t^{\sigma} \circ I^{\sigma} = (m_t/|X|) I^{\sigma}$ 
and hence $E_t^{\sigma} \circ I = (m_t/|X|) I \ne (m_{d-t+1}/|X|) I$.  
Therefore $E_t^{\sigma} \ne E_{d-t+1}$. Thus, the ordering 
$\{E_i^{\sigma} \}_{i=0}^d$ does not coincide with (3) for $d \geq 2$. 
If $d=5$, then $m_1 \ne m_5$ and hence $E_1^{\sigma} \ne E_{5}$. Therefore $\{E_i^{\sigma} \}_{i=0}^5$ does not coincide with (5). 
Thus the proposition follows.  
	\end{proof}
	
Remark that the known $Q$-polynomial schemes with some irrational $K_i$ and $d>2$ 
are the ordinary $n$-gons and the association scheme obtained from the icosahedron 
\cite{Kiyota-Suzuki, Martin-Muzychuk-Williford}. 
We can give a similar equivalent condition of the $P$-polynomial property of symmetric association schemes \cite{Kurihara-Nozaki}. 
Let $\theta_i=p_1(i)$ for $0 \leq i \leq d$. 
     \begin{theorem} 
	 Let $\mathfrak{X}$ be a symmetric association scheme of class $d\ge 2$. 
Suppose $\{\theta_j\}_{j=0}^d$ are mutually distinct. Then the following are equivalent: 
\begin{enumerate}
	\item $\mathfrak{X}$ is a $P$-polynomial association scheme with respect to 
$A_1$. 
	\item There exists $l \in \{2,3,\ldots,d\}$ such that  for any $i \in \{1,2,\ldots d\}$, 
	\[
	 \prod_{\substack{j=1\\ j \ne i}}^d \frac{\theta_0-\theta_j}{\theta_i-\theta_j}= -q_i(l). 
	\]
	\end{enumerate}
	Moreover if $(2)$ holds, then $l=i_d$.
	\end{theorem}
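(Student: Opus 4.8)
The plan is to dualize the proof of Theorem~\ref{main} line for line, using the standard duality of the Bose--Mesner algebra that interchanges the two bases $A_i\leftrightarrow E_i$, the ordinary and Hadamard products $\cdot\leftrightarrow\circ$, the parameter families $p_{ij}^k\leftrightarrow q_{ij}^k$ and $p_i(j)\leftrightarrow q_i(j)$, and the eigenvalue systems $\theta_j=p_1(j)\leftrightarrow\theta_j^\ast=q_1(j)$. Under this correspondence the Hadamard powers $E_1^{\circ h}$ are replaced by the ordinary powers $A_1^h$, and the part played by the Krein condition $q_{ij}^k\ge0$ is taken over by the (stronger) fact that the intersection numbers $p_{ij}^k$ are nonnegative integers. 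The hypothesis that $\{\theta_j\}_{j=0}^d$ are mutually distinct guarantees that $A_1$ has $d+1$ distinct eigenvalues, so that $\{A_1^h\}_{h=0}^d$ is a basis of the Bose--Mesner algebra, mirroring the role of the linearly independent $\{E_1^{\circ i}\}_{i=0}^d$ above.

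First I would set up the $P$-polynomial analogue of Lemma~\ref{key}. Say that $A_j$ is a \emph{component} of $M$ if $A_j\circ M\ne0$, and let $N_h$ be the set of indices $j$ for which $A_j$ is a component of $A_1^h$ but not of $A_1^l$ for $0\le l\le h-1$, so $N_0=\{0\}$ and $N_1=\{1\}$. The claim is that $\mathfrak{X}$ is $P$-polynomial with respect to $A_1$ if and only if $N_d$ is nonempty, if and only if $|N_d|=1$. Its proof copies the proof of Lemma~\ref{key}: writing $\sum_{h=0}^{i-1}A_1^h=\sum_{j\in\mathcal{I}}\gamma_j A_j$ with $\mathcal{I}=\cup_{h=0}^{i-1}N_h$, the coefficients $\gamma_j$ are positive because they count walks, so emptiness of $N_i$ forces $\sum_{j\in\mathcal{I}}\gamma_j p_{1,j}^k=0$ and hence $p_{1,j}^k=0$ for all $j\in\mathcal{I}$, $k\notin\mathcal{I}$; substituting this into $A_1^{i+1}=A_1\cdot A_1^i$ shows $N_{i+1}$ is empty too. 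A nonempty $N_d$ therefore forces $|N_h|=1$ for every $h$, giving the ordering $A_0,A_1,A_{i_2},\ldots,A_{i_d}$ with $N_h=\{i_h\}$.

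For $(1)\Rightarrow(2)$ I would assume, without loss of generality, that $\mathfrak{X}$ is $P$-polynomial with respect to $\{A_i\}_{i=0}^d$ and put
\[
M_i:=G_i(A_1),\qquad G_i(t):=\prod_{\substack{j=1\\ j\ne i}}^d\frac{t-\theta_j}{\theta_i-\theta_j}.
\]
Evaluating $G_i$ on the spectral decomposition $A_1=\sum_k\theta_k E_k$ gives $M_i=K_iE_0+E_i$ with $K_i=\prod_{j\ne i}(\theta_0-\theta_j)(\theta_i-\theta_j)^{-1}$. As $G_i$ has degree $d-1$, the matrix $M_i$ lies in the span of $A_0,\ldots,A_{d-1}$, so $M_i\circ A_d=0$; together with $E_0\circ A_d=\frac{1}{|X|}A_d$ and $E_i\circ A_d=\frac{1}{|X|}q_i(d)A_d$ this yields $K_i=-q_i(d)$, which is $(2)$ with $l=d$.

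For $(2)\Rightarrow(1)$ I would combine $A_1^j=\sum_k\theta_k^j E_k$ with $E_k\circ A_l=\frac{1}{|X|}q_k(l)A_l$ to obtain $A_1^j\circ A_l=\frac{1}{|X|}(\theta_0^j-\sum_{i=1}^d\theta_i^j K_i)A_l$, which vanishes for every $j\le d-1$ by Lemma~\ref{sum_Ki_2} (with $\beta_i=\theta_i$ and $x=\theta_0$); thus $l\notin N_j$ for $j\le d-1$. The dual identity $\prod_{j=1}^d(A_1-\theta_j I)(\theta_0-\theta_j)^{-1}=E_0$ exhibits $E_0=\frac{1}{|X|}J$, of which $A_l$ is a component, as a polynomial of degree $d$ in $A_1$; since $A_l$ is not a component of any lower power, it is a component of $A_1^d$, so $l\in N_d$ and the dual of Lemma~\ref{key} finishes the proof, with $l=i_d$. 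The only step requiring genuine care is the faithful transcription of Lemma~\ref{key}: one must verify that every appeal to the Krein condition there admits a $P$-dual, which it does because the intersection numbers are nonnegative. This makes the positivity argument if anything cleaner than the original, so the real work is purely the bookkeeping of the product and parameter correspondences.
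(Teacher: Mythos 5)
Your proof is correct, and each dual step checks out: the nonnegativity of the intersection numbers $p_{1,j}^k$ (and of the walk-counting coefficients $\gamma_j$) validly replaces the Krein condition in the dual of Lemma~\ref{key}; the identity $G_i(A_1)=K_iE_0+E_i$ together with $M_i\circ A_d=0$ dualizes the computation $M_i=K_iI+A_i$, $M_iE_d=0$ in the proof of Theorem~\ref{main}; and $\prod_{j=1}^{d}(A_1-\theta_j I)(\theta_0-\theta_j)^{-1}=E_0=|X|^{-1}J$, of which $A_l$ is a component, correctly plays the role of the Hadamard-product identity $\prod_{j=1}^{d}(|X|E_1-\theta_j^{\ast}J)(\theta_0^{\ast}-\theta_j^{\ast})^{-1}=I$ there. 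The paper states this theorem without proof, deferring to the companion paper \cite{Kurihara-Nozaki}, and your line-by-line dualization of the proof of Theorem~\ref{main} is exactly the argument the authors indicate.
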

	
	\noindent
\textbf{Acknowledgments.} 
Both of authors are supported by the fellowship of the Japan Society for
the Promotion of Science. 
The authors would like to thank Eiichi Bannai, Edwin van Dam, Tatsuro Ito, William J. Martin, Akihiro Munemasa, Hiroshi Suzuki, Makoto Tagami, Hajime Tanaka, Paul Terwilliger and Paul-Hermann Zieschang for useful discussions and comments.  

{\it Hirotake Kurihara}\\
	Mathematical Institute, \\
	Tohoku University \\
	Aramaki-Aza-Aoba 6-3, \\
	Aoba-ku, \\
	Sendai 980-8578, \\
	Japan\\ 
	sa9d05@math.tohoku.ac.jp\\
\quad\\
{\it Hiroshi Nozaki}\\
	Graduate School of Information Sciences, \\
	Tohoku University \\
	Aramaki-Aza-Aoba 6-3-09, \\
	Aoba-ku, \\
	Sendai 980-8579, \\
	Japan\\ 
	nozaki@ims.is.tohoku.ac.jp\\

\end{document}